\newcommand{\lb}{\label}
\newcommand{\go}{\rightarrow}
\newcommand{\ee}{\end{equation}}
\newcommand{\be}{\begin{equation}}
\newcommand{\bea}{\begin{eqnarray}}
\newcommand{\eea}{\end{eqnarray}}
\newcommand{\sbea}{\begin{subequations}\begin{eqnarray}}
\newcommand{\seea}{\end{eqnarray}\end{subequations}} 
\newcommand{\ees}{\end{equation*}}
\newcommand{\bes}{\begin{equation*}}
\newcommand{\beas}{\begin{eqnarray*}}
\newcommand{\eeas}{\end{eqnarray*}}
\newcommand{\rf}[1]{(\ref{#1})}
\theoremstyle{plain}
\newtheorem{theorem}{\bf Theorem}[section]
\newtheorem{corollary}[theorem]{\bf Corollary}
\theoremstyle{remark}
\newtheorem{remark}[theorem]{\bf Remark}
\newtheorem{example}[theorem]{\bf Example}
\begin{document}
\title{Long-time behaviour of solutions to a singular heat equation with an application to hydrodynamics}

\author{Georgy Kitavtsev\thanks{Mathematical Institute, University of Oxford,
Woodstock Road, Oxford OX2 6GG, UK}$\ $ and Roman M. Taranets\thanks{Institute of Applied Mathematics and Mechanics of the NASU, Dobrovolskogo Str. 1, 84100, Sloviansk, Ukraine}}

\maketitle

\begin{abstract}
In this paper, we extend the results of~\cite{FKT18} by proving exponential asymptotic $H^1$-convergence
of solutions to a one-dimensional singular heat equation with $L^2$-source
term that describe evolution of viscous thin liquid sheets while considered in the Lagrange
coordinates. Furthermore, we extend this asymptotic convergence result to the
case of a time inhomogeneous source. This study has also independent interest for the porous medium equation theory.

\end{abstract}

\section{Introduction}\label{A}

In this paper, we study the long-time behaviour of solutions
to the initial boundary value problem for a singular heat equation in the presence of an external source term:
\begin{equation}\label{eq-1}
u_t = \nu (u^{-2}u_x)_x  + f(x,t) \text{ in } Q_T:=(0,1)\times(0,T),
\end{equation}
\begin{equation}\label{eq-2}
u_x(0,t)=u_x(1,t)=0\quad\text{for }t\in (0,\,T),
\end{equation}
\begin{equation}\label{eq-3}
u(x,0)=u_0(x)\quad\text{for }x\in(0,\,1),
\end{equation}
with $T > 0$ and $\nu > 0$.
Additionally, we assume zero mean of the external force:
\be
\int_0^1 f(x,t)\,dx=0\quad\text{for all } t\in(0,\,T).
\lb{f_mean}
\ee
Let us point out that equation \rf{eq-1} without source term $f(x,t)$ was considered first in~\cite{St51} and is related to the porous-media equations~\cite{Va07}:
\be
u_t=\Delta u^m,\quad m\geqslant-1.
\lb{PME}
\ee
Indeed, for the critical exponent $m=-1$ the latter equation coincides with \rf{eq-1} after the reversion of the time variable.
There exists also a relation between \rf{eq-1} and the classical heat equation through the Lie-B\"acklund transform
investigated in~\cite{Ro79,BK80,Ki90} that allows to find special exact solutions to the former. We do not pursue this
approach in this study. Let us also mention recent study~\cite{SSZ16} of solutions to \rf{eq-1} with a time dependent source term
$f(x,t)$ and an application to the Penrose-Fife phase field system~\cite{PF90,Zh92}, in which \rf{eq-1} similar to the original study~\cite{St51} 
appears as an equation for temperature distribution.

By contrast, the motivation of this study for considering problem \rf{eq-1}-\rf{f_mean} comes from the recent connection found in~\cite{FKT18}
of it to the degenerate parabolic system 
\begin{subequations}
\label{SSM}
\begin{align}
v_t+vv_y&=\frac{\nu}{h}[hv_y]_y,
\label{SSM1}\\
h_t&= -\left(h v\right)_y,
\label{SSM2}
\end{align}
\end{subequations}
describing evolution of the free surface $h(y,t)$ and averaged lateral
velocity $v(y,t)$ of a 2D viscous thin sheet in the absence of surface tension with $(y,t)\in Q_T$ and
considered with the following boundary conditions:
\be
h_y(0,t)=h_y(1,t)=v(0,t)=v(1,t)=0\quad\text{for}\quad t\in(0,\,T),
\lb{BC}
\ee
and initial data having positive height:
\bes
h(y,0)=h_0(y)>0,\  v(y,0)=v_0(y)\quad\text{for}\quad y\in(0,\,1).
\ees
We note that system \rf{SSM} is also a special example of the viscous
Saint-Venant or shallow-water equations that were derived and studied
extensively in the last three decades, see~\cite{ODB97,GP01,Br09,BN11,EF15} and references therein.

It was shown in~\cite[section 3]{FKT18} that  system \rf{SSM} and equation \rf{eq-1} considered with the special time-independent source term:
\be
f(x,t)=f_0(x):=\frac{M}{h_0(y(x,0))}\left[v_0(y(x,0))+\nu\frac{h_{0,y}(y(x,0))}{h_0(y(x,0))}\right]_y
\lb{f_def}
\ee
are related by the Eulerian-to-Lagrangian coordinate transformation: 
\be
y_x(x,t)=\frac{M}{h(y(x,t),t)},\quad y_t(s,t)=v(y(x,t),t),\quad y(x,0)=M\int_0^x\frac{ds}{h_0(s)}
\lb{x_s}
\ee
with constant $M>0$ denoting the total conserved mass of the viscous sheet:
\be
M(t):=\int_0^1 h(y,t)\,dy=M\quad\text{for all } t\in(0,\,T).
\lb{M_consv}
\ee
Additionally, one defines function
\be
u(x,t):=\frac{M}{h(y(x,t),t)}.
\lb{u_h_rel}
\ee
The conservation of mass \rf{M_consv} is ensured by boundary conditions \rf{BC} and pertains according to transformation \rf{x_s} also for the
corresponding solutions to problem \rf{eq-1}-\rf{f_mean} with \rf{f_def} in
the following form:
\be
\int_0^1u(x,t)\,dx=1\quad\text{for all } t\in(0,\,T).
\lb{cons_mass}
\ee
In~\cite[Theorem 4.1]{FKT18} the authors have shown that without the source term $f(x,t)\equiv 0$ solutions to \rf{eq-1}-\rf{f_mean}
asymptotically decay to the constant profile $u_\infty=1$ in $H^1$-norm. Besides, basing on numerical simulations of system \rf{SSM},
it was conjectured in~\cite{FKT18} (see Remark 4.3 and Fig. 2 there) that in the case of time-independent source $f(x,t)=f_0(x)$
solutions to \rf{eq-1}-\rf{f_mean} converge to a unique non-constant positive
steady state $u_\infty$ solving the stationary version of
\rf{eq-1}--\rf{eq-3}, namely,
\be
u_\infty(x) = \frac{\nu}{\int \limits_0^x { \int \limits_0^y { f(s)\, ds}dy  } + C_\nu}>0
\lb{LP}
\ee
with $C_\nu$ being a unique root, due to \rf{cons_mass}, of the following equation:
\be
\int_0^1\left[\int\limits_0^x {\int\limits_0^y { f(s)\, ds}dy  }+C_\nu\right]^{-1}\,dx=\nu^{-1}.
\lb{C}
\ee
Indeed, for $f\in L^2(0,\,1)$ the double integral in \rf{LP}-\rf{C} is
$C^1[0,\,1]$ function and, therefore, for any $\nu>0$ positive function $u_\infty\in
H^2(0,\,1)$ and constant $C_\nu$ are uniquely defined.

Moreover, numerical simulations of~\cite{FKT18} indicated that the corresponding solutions to the shallow-water system \rf{SSM}
defined through transformation \rf{x_s} converge as time goes to infinity to the corresponding limit profiles:
\be
y_\infty(x)=\int_0^x u_\infty(s)\,ds,\quad h_\infty(y_\infty(x))=\frac{M}{u_\infty(x)},\quad v_\infty(y_\infty(x))=0\quad\text{for }x\in(0,1),
\lb{final_non_flat} 
\ee
which are defined explicitly by the initial data $(h_0,v_0)$ as, due to \rf{f_def}, one has
\be
u_\infty(x) = \frac{\nu}{\int_0^x v_0((y(s,0))\,ds+\frac{\nu}{M}(h_0(y(x,0)-h_0(0))+C_\nu}.
\lb{u_infty}
\ee
This intriguing result implies selection of a single attracting steady-state \rf{final_non_flat} to system \rf{SSM} depending only
on the given initial data $(h_0,v_0)$ through $u_\infty$ in \rf{u_infty}.
We note that this selective long-time asymptotic behaviour can not be easily identified in Eulerian formulation \rf{SSM}, because
the set of the steady states to \rf{SSM} consists of pairs $(h_\infty,\,0)$ with any
sufficiently smooth $h_\infty(x)$ satisfying boundary conditions in \rf{BC}. We are not aware if this long-time solution
behaviour was shown analytically or numerically for the shallow-water system \rf{SSM} in the literature before study~\cite{FKT18}.

In this article, to put this result on a firm basis we prove that solutions to singular heat equation \rf{eq-1}--\rf{eq-3}
considered with $L^2$-source term satisfying \rf{f_mean} exponentially
converge to $u_\infty$ in $H^1$-norm, provided certain integro-algebraic relations between
$\nu,\,f(x,t)$ and $u_0$ hold (see conditions \rf{u0_cond}-\rf{conv_cond} and \rf{R_def}-\rf{nu_+} in Theorems
2.1 and 3.1, respectively). 

The rest of the article is organised as follows. In section 2, we prove an
exponential asymptotic decay result for the time homogeneous case 
$f(x,t)=f_0(x)$ (Theorem 2.1 and Corollary 2.2). In section 3, we generalise it to
the time inhomogeneous case (Theorem 3.1 and Corollary 3.3). In this case the
rate of the asymptotic convergence to the limit profile is determined by the
rate of $L^2$-convergence of the right-hand side $f(\cdot,t)$ as time goes to
infinity. In Examples 2.4 and 3.3 we calculate the explicit form of the
derived asymptotic convergence estimates for two concrete examples of source function
$f$. In particular, Example 2.4 shows how to transform these estimates
into the corresponding ones for the solutions to viscous shallow-water system
\rf{SSM}-\rf{BC} using Eulerian-to-Lagrangian transformation \rf{x_s}.

Our proofs use essentially the energy dissipation methods, 
see e.g.~\cite{CJMTU01,Va06} and references therein. In particular, Theorem 3.1 extends the results
of~\cite{DK06} on asymptotic exponential convergence to the self-similar
Barenblatt's profiles of solutions to porous-media equation \rf{PME}
for the case $m\geqslant 1$ considered also with a time-dependent source $f(x,t)$. We  also note that, in contrast to system considered in~\cite{DK06},   
\rf{eq-1}--\rf{f_mean} posses conservation of mass \rf{cons_mass} motivated by their relation to system \rf{SSM}
via transformation \rf{x_s} discussed above. 

Finally, we note that establishing a rigorous correspondence between systems \rf{SSM} 
and \rf{eq-1}--\rf{f_mean} with special right-hand side \rf{f_def} would demand analysis of regularity for their weak solutions~\cite{KLN11} and is out of the scope of this study.

\section{Time homogeneous case}\label{B}
In the case of $f(x,t)=f_0(x)\in L^2(0,\,1)$ in \rf{eq-1}--\rf{f_mean}, we prove the following result.
\begin{theorem}\label{Th1}
Let
\be
0< u_0(x) \in H^1(0,\,1):\ R_0:=||(u_0^{-1})_x||_2<\infty,
\lb{u0_cond}
\ee
and
\begin{equation}
f_0(x) \in L^2(0,\,1):\,
\int_0^1{f_0(x)\,dx} = 0\quad\text{and }P_0:=||\int_0^x { f_0(s)\, ds}||_2<\infty.
\lb{f_cond}
\end{equation}
If, additionally,
\be
R_0\in[0,\,1)\quad\text{and}\quad\nu\in \left(\frac{2P_0}{1-R_0},+\infty\right)
\lb{conv_cond}
\ee
then a weak solution $u(x,t)$ to problem \rf{eq-1}--\rf{eq-3} considered with \rf{cons_mass} satisfies
\begin{equation}\label{b-00}
0 <\frac{\nu}{\nu(1+R_0)+2P_0}\leqslant u(x,t) \leqslant \frac{\nu}{\nu(1-R_0)-2P_0}<\infty
\quad\text{for all }(x,\,t)\in Q_T.
\end{equation}
Moreover, $u^{-1}(\cdot,t)$ converges to the inverse of steady state solution
$u_{\infty}^{-1}$ defined by \rf{LP}-\rf{C} considered with $f=f_0$, namely,
\be
||u^{-1}(\cdot,t)-u^{-1}_{\infty}||_{H^1}\leqslant||u_0^{-1}-u^{-1}_{\infty}||_{H^1}
\exp\left\{-\frac{\pi^2t}{\nu}[\nu(1-R_0)-2P_0]^2\right\}\ \text{for all }t>0.
\lb{H1_conv}
\ee
\end{theorem}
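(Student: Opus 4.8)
The plan is to pass to the variable $w := u^{-1}$, for which equation~\rf{eq-1} should become a more tractable (indeed uniformly parabolic, once two-sided bounds on $u$ are known) equation, and to track the quantity $w - w_\infty$ where $w_\infty := u_\infty^{-1}$. First I would rewrite \rf{eq-1}: since $u_t = -u^2 w_t$ and $u^{-2}u_x = -w_x$, dividing \rf{eq-1} by $-u^2 = -w^{-2}$ gives
\be
w_t = \nu\, w^2 w_{xx} + \text{(lower order)} - w^2 f_0(x),
\lb{w_eq_plan}
\ee
with Neumann boundary conditions $w_x(0,t)=w_x(1,t)=0$ inherited from \rf{eq-2}, and I would note that $w_\infty$ from \rf{LP} satisfies $\nu\, w_\infty'' = -f_0$ together with $w_\infty'(0)=w_\infty'(1)=0$ (this is exactly the stationary version of \rf{eq-1} written in the $w$ variable, using \rf{f_mean}). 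The natural energy is $E(t) := \tfrac12\|w(\cdot,t)-w_\infty\|_{H^1}^2$, or perhaps just the $\dot H^1$ seminorm $\tfrac12\|w_x - w_\infty'\|_2^2$, since the zero-mean/mass constraint \rf{cons_mass} should pin down the $L^2$ part.

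The first substantive step is the uniform bound \rf{b-00}. I would obtain this by a continuity/bootstrap argument in time: assuming $R(t) := \|w_x(\cdot,t)\|_2$ stays below $1$ on a maximal interval, the mass constraint $\int_0^1 u\,dx = 1$ forces $\min u \le 1 \le \max u$, hence $w$ takes a value $\le 1$ somewhere; combined with $\|w_x\|_2 = R(t)$ and Cauchy--Schwarz this gives $\|w - \overline{w}\|_\infty \le \tfrac12\|w_x\|_1 \le \tfrac12 R(t)$, and a parallel estimate involving $P_0 = \|\int_0^x f_0\|_2$ should produce exactly the two-sided bound in \rf{b-00} once one also controls $\overline w$ via the stationary profile. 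The key point is then to show $R(t)$ cannot escape $[0,1)$: this is where the energy estimate feeds back. So the second step is to differentiate $\tfrac12\|w_x - w_\infty'\|_2^2$ in time using~\rf{w_eq_plan}, integrate by parts (the Neumann conditions kill boundary terms), and use $\nu w_\infty'' = -f_0$ to combine the source term with the diffusion term. After the dust settles I expect an inequality of the shape
\be
\frac{d}{dt}\,\tfrac12\|w_x - w_\infty'\|_2^2 \le -\frac{\nu(1-R_0)-2P_0}{\nu}\,\big[\nu(1-R_0)-2P_0\big]\,\|w_{xx} - w_\infty''\|_2^2,
\lb{energy_plan}
\ee
i.e. a coercive dissipation term with the constant appearing in \rf{H1_conv}, where the lower bound on $u$ from \rf{b-00} is what converts the $\nu w^2 w_{xx}$ coefficient into the clean factor. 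Applying the Poincaré inequality for functions with zero-mean derivative (eigenvalue $\pi^2$, since $w_x - w_\infty'$ has mean zero and, at the endpoints, matches the Neumann structure) gives $\|w_{xx}-w_\infty''\|_2^2 \ge \pi^2\|w_x - w_\infty'\|_2^2$, and Grönwall then yields the $\dot H^1$ decay at rate $\tfrac{\pi^2}{\nu}[\nu(1-R_0)-2P_0]^2$. Upgrading from the seminorm to the full $H^1$ norm in \rf{H1_conv} uses that $\int_0^1(w - w_\infty)\,dx$... is controlled: here one uses the mass constraint \rf{cons_mass} on $u = w^{-1}$ rather than on $w$ itself, so a short lemma relating $\|w - w_\infty\|_2$ to $\|w_x - w_\infty'\|_2$ under $\int u = \int u_\infty = 1$ is needed, again exploiting the two-sided bounds \rf{b-00} to linearize $w\mapsto w^{-1}$.

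The main obstacle, I expect, is making the bootstrap for \rf{b-00} and the energy decay \rf{energy_plan} genuinely self-consistent: the dissipation constant $\nu(1-R_0)-2P_0$ is positive precisely under hypothesis \rf{conv_cond}, but to even write \rf{energy_plan} one needs the lower bound on $u$, which in turn needs $R(t)<1$, which one wants to deduce from the energy being non-increasing. The clean way is to run a single continuity argument: on $[0,T^*)$ assume $R(t)\le 1$ (say), derive \rf{b-00} and hence \rf{energy_plan}, conclude $\|w_x - w_\infty'\|_2$ is non-increasing, and then bound $R(t) = \|w_x\|_2 \le \|w_x - w_\infty'\|_2 + \|w_\infty'\|_2 \le \|w_{0,x} - w_\infty'\|_2 + \|w_\infty'\|_2$; one must check that \rf{conv_cond} (plus $R_0 = \|w_{0,x}\|_2 < 1$ and the bound $\|w_\infty'\|_2 \le P_0/\nu$ from $\nu w_\infty'' = -f_0$) makes this strictly less than $1$, closing the loop and allowing $T^* = T$ for all $T$. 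A secondary technical point is justifying the formal computations at the level of weak solutions — differentiating the energy, integrating by parts — which I would handle by the usual approximation/regularization scheme (mollified, non-degenerate version of \rf{eq-1}) and passing to the limit, noting that \rf{b-00} provides the uniform ellipticity needed for the limit to be legitimate.
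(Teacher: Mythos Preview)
Your overall strategy matches the paper's: pass to $q = \nu^{1/2}u^{-1}$ (your $w$ up to scaling), so that \rf{eq-1} becomes exactly
\[
q_t = q^2\bigl(q_{xx} - \nu^{-1/2}f_0\bigr)
\]
with \emph{no} lower-order terms (contrary to your \rf{w_eq_plan}); then control the relative energy $\tfrac12\|(q-q_\infty)_x\|_2^2$ via the identity $\tfrac{d}{dt}\tfrac12\int(q-q_\infty)_x^2 = -\int q^2(q-q_\infty)_{xx}^2$, and finish with Poincar\'e plus Gr\"onwall using the lower bound on $q$.

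The one substantive difference is how the uniform bounds \rf{b-00} are obtained. You propose a continuity/bootstrap argument and flag it as the main obstacle. The paper bypasses this entirely by first considering the \emph{absolute} energy
\[
\mathcal{E}(q) = \int_0^1\Bigl[\tfrac12 q_x^2 + \nu^{-1/2}f_0\,q\Bigr]\,dx,
\]
whose dissipation identity $\mathcal{E}(q(t)) + \iint q^2(q_{xx}-\nu^{-1/2}f_0)^2 = \mathcal{E}(q_0)$ yields a quadratic inequality in $\|q_x(\cdot,t)\|_2$ and hence $\|q_x(\cdot,t)\|_2 \le \nu^{1/2}R_0 + 2\nu^{-1/2}P_0$ for all $t$, directly. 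Combined with the fact that the mass constraint forces $q(x_0,t)=\nu^{1/2}$ at some $x_0$, this gives \rf{b-00} without any continuity argument. In fact your bootstrap is unnecessary even within your own framework: the relative-energy derivative $-\int q^2(q-q_\infty)_{xx}^2$ is nonpositive irrespective of any a priori bound on $u$, so $\|w_x - w_{\infty,x}\|_2 \le \|w_{0,x} - w_{\infty,x}\|_2$ holds unconditionally (formally), and the triangle inequality then bounds $\|w_x\|_2$ by $R_0 + 2P_0/\nu$ with no loop to close. Either route recovers the same constant and hence the same \rf{b-00}.

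Your remark about upgrading from the $\dot H^1$ seminorm to the full $H^1$ norm is well taken and is in fact more explicit than the paper, which asserts \rf{H1_conv} having only controlled $\|(q-q_\infty)_x\|_2$. The fix is as you indicate: since $\int(q^{-1}-q_\infty^{-1}) = 0$, the difference $q-q_\infty$ vanishes somewhere, whence $\|q-q_\infty\|_\infty \le \|(q-q_\infty)_x\|_2$.
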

\begin{proof}[Proof of Theorem~\ref{Th1}]
Introducing a new function $q$ by
\be
\lb{q}
q(x,t) = \nu^{\frac{1}{2}} u^{-1}(x,t),
\ee
we can rewrite problem (\ref{eq-1})--(\ref{eq-3}) in the form
\begin{equation}\label{eq-1-0}
q_t = q^{2} ( q_{xx}  - \nu^{-\frac{1}{2}} f_0) \text{ in } Q_T,
\end{equation}
\begin{equation}\label{eq-2-0}
q_x(0,t)=q_x(1,t)=0\quad\text{for }t>0,
\end{equation}
\begin{equation}\label{eq-3-0}
q(x,0) = q_0(x):=\nu^{\frac{1}{2}}  u^{-1}_0(x)\quad\text{for }x\in(0,\,1).
\end{equation}
Moreover, by \rf{cons_mass} we have
\begin{equation}\label{mass-0}
 \int_0^1{q^{-1} \,dx} = \int_0^1{q^{-1}_0 \,dx}=\nu^{-\frac{1}{2}}.
\end{equation}
Multiplying (\ref{eq-1-0}) by $- ( q_{xx}  - \nu^{-\frac{1}{2}} f_0) $, after integration over $(0,\,1)$, we get
\begin{equation}\label{eq-5-0}
 \tfrac{d}{dt}\mathcal{E}(q(t)) +
 \int_0^1{q^2 (q_{xx}  - \nu^{-\frac{1}{2}} f_0(x) )^2 \,dx} =0,
\end{equation}
where the {\it energy functional} is
$$
\mathcal{E}(q(t)):=\int_0^1{ [ \tfrac{1}{2} q_x^2 + \nu^{-\frac{1}{2}} f_0(x) q ]\,dx}.
$$
Integrating (\ref{eq-5-0}) in time, one obtains the energy equality:
\begin{equation}\lb{EnEq}
\mathcal{E}(q(t)) +
 \iint \limits_{Q_T}{q^2 (q_{xx}  - \nu^{-\frac{1}{2}} f_0(x) )^2 \,dx dt} = \mathcal{E}(q_0).
\end{equation}
Let us introduce a set
$$
\mathcal{M}_\nu : = \Bigl \{q \in H^1(0,\,1):\ q_x(0)=q_x(1)=0,\ \int_0^1{q^{-1} \,dx}=\nu^{-\frac{1}{2}}\Bigr\}.
$$
Let also $q_{\min}(x)$ be a unique solution of the corresponding Euler-Lagrange equation
$$
q_{xx} = \nu^{-\frac{1}{2}}  f_0(x)\quad\text{with }q_x(0)=q_x(1)=0,
$$
describing critical points of the functional $\mathcal{E}$ over $\mathcal{M}_\nu$.
Hence, 
\be
q_{\min}(x)=\frac{\nu^\frac{1}{2}}{u_\infty(x)}\quad\text{with }u_\infty(x)\text{ defined in \rf{LP}-\rf{C}}.
\ee
Next, we look for a lower bound for $\mathcal{E}(q(t))$.
Using integration by parts, one can estimate
\bes
\int_0^1f_0(x)q\,dx=-\int_0^1q_{\min,x}q_{x}\,dx\leqslant ||q_{\min,x}||_2||q_x||_2.
\ees
Hence, one has
\bes
\mathcal{E}(q(t))\geqslant\frac{1}{2}||q_x||_2^2-||q_{\min,x}||_2||q_x||_2\geqslant-\frac{1}{2}||q_{\min,x}||_2=\min_{q\in\mathcal{M}_\nu}\mathcal{E}(q).
\ees
We conclude that the lower semi-continuous bounded from below functional $\mathcal{E}(q)$ attains its unique global minimum $q_{\min}(x)$ on $\mathcal{M}_\nu$.
According to definitions \rf{u0_cond}-\rf{f_cond},
\bes
P_0=\nu^{1/2}||q_{\min,x}||_2\quad\text{and }R_0=\nu^{-1/2}||q_{0,x}||_2.
\ees
Next, \rf{EnEq} together with H\"older inequality imply
\bes
0\geqslant\frac{1}{2}||q_x(\cdot,t)||_2^2-\nu^{-1/2}P_0||q_x(\cdot,t)||_2-\mathcal{E}(q_0),
\ees
and, consequently, the following upper bound for solutions to
\rf{eq-1-0}-\rf{eq-3-0}:
\bes
||q_x||_2 \leqslant\nu^{-1/2}P_0 + \sqrt{  \nu^{-1} P_0^2  + 2 E(q_0)  }\leqslant\nu^{-1/2}P_0 + \sqrt{\nu^{-1} P_0^2  + \nu R_0^2 + 2 P_0 R_0},
\ees
whence
\be
||q_x(\cdot,t)||_2\leqslant 2\nu^{-1/2}P_0+\nu^{1/2}R_0\quad\text{for all }t\in(0,\,T).
\lb{UpBnd}
\ee
By (\ref{mass-0}) there exists $x_0 \in (0,\,1)$ such that $q(x_0,t) =\nu^{\frac{1}{2}}$. 
Using this and \rf{UpBnd} one estimates:
\bes
\left|q(x,t)-\nu^{1/2}\right|\leqslant||q_x(\cdot,t)||_2\leqslant 2\nu^{-1/2}P_0+\nu^{1/2}R_0
\ees
The last inequality implies the pointwise estimates
\be
\nu^{1/2}-2\nu^{-1/2}P_0-\nu^{1/2}R_0\leqslant q(x,t)\leqslant\nu^{1/2}+2\nu^{-1/2}P_0+\nu^{1/2}R_0
\lb{Ms2}
\ee
for all $(x,t)\in Q_T$. From \rf{Ms2} it follows that $q(x,t)>0$ if conditions \rf{conv_cond} holds. 
Furthermore, by \rf{q} and \rf{Ms2} conditions \rf{conv_cond} imply estimate \rf{b-00}.

Next, let us introduce 
\be 
w:= q - q_{\min}.
\lb{w}
\ee
Then
\begin{multline*}
\mathcal{ E }(q|q_{\min}) := \mathcal{ E }(q(t)) - \mathcal{E}(q_{\min}) = \tfrac{1}{2} \int_0^1{ w_x ( q  + q_{\min } )_x \,dx}
+ \\
\nu^{-\frac{1}{2}} \int_0^1{  f_0(x) w\,dx} =
- \tfrac{1}{2} \int_0^1{ w ( q  + q_{\min } )_{xx} \,dx}
+   \int_0^1{  q_{\min,xx}   w\,dx} = \\
 - \tfrac{1}{2} \int_0^1{ w w _{xx} \,dx} =
  \tfrac{1}{2} \int_0^1{ w_x^2 \,dx}.
\end{multline*}
From here we find that
\begin{equation}\label{energy}
\tfrac{d}{dt}\mathcal{ E }(q|q_{\min}) = - \int_0^1{ w_t w_{xx} \,dx} =
- \int_0^1{q^2 w^2_{xx} \,dx} \leqslant 0.
\end{equation}
By \rf{eq-2-0} the Poincar\'{e} inequality
\bes
\int_0^1{ w_x^2 \,dx} \leqslant \tfrac{1}{\pi^2}    \int_0^1{ w^2_{xx} \,dx} 
\ees
holds. Combining it with \rf{b-00}, one arrives at
$$
\frac{\pi^2}{\nu}[\nu(1-R_0)-2P_0]^2\mathcal{ E }(q|q_{\min}) \leqslant \int_0^1{q^2 w^2_{xx} \,dx}.
$$
Hence, by (\ref{energy}) we deduce that
\begin{equation}\label{eq-6-1}
 \tfrac{d}{dt}\mathcal{ E }(q|q_{\min}) +\frac{\pi^2}{\nu}[\nu(1-R_0)-2P_0]^2\mathcal{ E }(q|q_{\min})\leqslant 0 .
\end{equation}
From (\ref{eq-6-1}) it follows that
$$
0 \leqslant \mathcal{ E }(q|q_{\min}) \leqslant 
\mathcal{ E }(q_0|q_{\min})\exp\left\{-\frac{\pi^2}{\nu}[\nu(1-R_0)-2P_0]^2t\right\}\to 0
\text{ as } t \to +\infty. 
$$
Therefore, we obtain that
$$
 q(\cdot,t) \to  q_{\min} \text{ strongly  in } H^1(0,1) \text{ as } t \to +\infty
$$
and, consequently, \rf{H1_conv}.
\end{proof}
\begin{corollary}\label{Col}
Conditions of Theorem 2.1 imply also the direct exponential asymptotic convergence of 
$u(\cdot,t)$ to $u_{\infty}$ in $H^1(0,\,1)$. Namely, there exists a constant $C$ depending on $\nu,\,f_0(x)$ and $u_0$ such that 
\bes
||u(\cdot,t)-u_{\infty}||_{H^1}\leqslant C||u_0-u_{\infty}||_{H^1}
\exp\left\{-\frac{\pi^2t}{\nu}[\nu(1-R_0)-2P_0]^2\right\}\ \text{for all }t>0.
\ees
\end{corollary}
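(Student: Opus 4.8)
The plan is to deduce Corollary~\ref{Col} directly from the $H^1$-convergence \rf{H1_conv} already established for $u^{-1}$, by showing that the inversion map $g\mapsto g^{-1}$ is Lipschitz (in $H^1$) on the class of functions confined between the two positive constants of \rf{b-00}. Write $c_-:=\nu/(\nu(1+R_0)+2P_0)$ and $c_+:=\nu/(\nu(1-R_0)-2P_0)$, so that \rf{b-00} gives $c_-\le u(x,t)\le c_+$ for all $(x,t)\in Q_T$; the same two-sided bounds hold for $u_\infty$ (as the limit, or directly from \rf{LP}) and, at $t=0$, for $u_0$. Note also that $u(\cdot,t)\in H^1(0,1)$ for each $t$, since $q=\nu^{1/2}u^{-1}$ has $\|q_x\|_2$ bounded by \rf{UpBnd} and $q$ is bounded below.

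First I would record the pointwise identity $u-u_\infty=-\,u\,u_\infty\,(u^{-1}-u_\infty^{-1})$, which with the two-sided bounds yields $\|u(\cdot,t)-u_\infty\|_{L^\infty}\le c_+^2\,\|u^{-1}(\cdot,t)-u_\infty^{-1}\|_{L^\infty}\le C\,\|u^{-1}(\cdot,t)-u_\infty^{-1}\|_{H^1}$ by the one-dimensional embedding $H^1(0,1)\hookrightarrow L^\infty(0,1)$. For the derivative I would use $u_x=-u^2(u^{-1})_x$ to write
\[
(u-u_\infty)_x=-\,u^2\bigl[(u^{-1})_x-(u_\infty^{-1})_x\bigr]-(u^2-u_\infty^2)\,(u_\infty^{-1})_x ,
\]
and bound the first term by $c_+^2\,\|(u^{-1}-u_\infty^{-1})_x\|_2$ and the second by $\|u^2-u_\infty^2\|_\infty\,\|(u_\infty^{-1})_x\|_2\le 2c_+\,\|u-u_\infty\|_\infty\,(P_0/\nu)$, using that $(u_\infty^{-1})_x=\nu^{-1}\int_0^x f_0(s)\,ds$ and hence $\|(u_\infty^{-1})_x\|_2=P_0/\nu$. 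Combining these with the $L^\infty$ estimate above produces a constant $C_1=C_1(\nu,f_0,u_0)$ with $\|u(\cdot,t)-u_\infty\|_{H^1}\le C_1\,\|u^{-1}(\cdot,t)-u_\infty^{-1}\|_{H^1}$ for all $t\ge0$.

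Applying \rf{H1_conv} then gives the asserted exponential decay, but with $\|u_0^{-1}-u_\infty^{-1}\|_{H^1}$ in place of $\|u_0-u_\infty\|_{H^1}$. To finish I would run the same comparison once more at $t=0$ in the opposite direction: the identity $u_0^{-1}-u_\infty^{-1}=-(u_0u_\infty)^{-1}(u_0-u_\infty)$ together with the lower bound $c_-$ and $(u_0^{-1})_x=-u_0^{-2}u_{0,x}$ gives $\|u_0^{-1}-u_\infty^{-1}\|_{H^1}\le C_2(\nu,f_0,u_0)\,\|u_0-u_\infty\|_{H^1}$; setting $C=C_1C_2$ completes the argument. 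I do not anticipate a genuine obstacle here: the only delicate point is the term $(u^2-u_\infty^2)(u_\infty^{-1})_x$ in the derivative identity, and it is controlled precisely because \rf{b-00} supplies uniform-in-time positive upper and lower bounds and because in one space dimension the already-proven $H^1$-smallness of $u^{-1}-u_\infty^{-1}$ upgrades to $L^\infty$-smallness; no new differential inequality is required.
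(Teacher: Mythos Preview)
Your argument is correct and is essentially the same as the paper's: both deduce the result from \rf{H1_conv} together with uniform two-sided positive bounds on $u$, $u_0$, and $u_\infty$, via the Lipschitz behaviour of $g\mapsto g^{-1}$ on $H^1$ between positive constants. The paper's proof is terse and merely records the separate pointwise bounds $(1+P_0/\nu)^{-1}\le u_\infty\le(1-P_0/\nu)^{-1}$ and $(1+R_0)^{-1}\le u_0\le(1-R_0)^{-1}$ in place of re-using \rf{b-00} for $u_0$ and $u_\infty$, whereas you spell out the decomposition $(u-u_\infty)_x=-u^2[(u^{-1})_x-(u_\infty^{-1})_x]-(u^2-u_\infty^2)(u_\infty^{-1})_x$ explicitly; but the mechanism is identical.
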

\begin{proof}
The statement follows from \rf{H1_conv}, uniform bounds \rf{b-00} and the following pointwise bounds:
\bes
\left(1+\frac{P_0}{\nu}\right)^{-1}\leqslant u_\infty(x)\leqslant \left(1-\frac{P_0}{\nu}\right)^{-1},\
\left(1+R_0\right)^{-1}\leqslant u_0(x)\leqslant \left(1-R_0\right)^{-1}\ \text{for }x\in (0,\,1).
\ees
In the last estimates we used that $R_0<1$ and $P_0<\nu$ holding by assumption \rf{conv_cond}.
\end{proof}
\begin{example}
Here, we show how the asymptotic convergence result from Theorem 2.1 can
be applied to show the corresponding convergence (at least formally) of solutions to the viscous
shallow water system \rf{SSM}-\rf{BC}. The following example is similar to one
presented in~\cite[Remark 4.3 and Fig. 2]{FKT18} and demonstrates that
a solution to \rf{SSM} starting from a constant $h_0(y)=1$ but having
non-zero initial velocity $v_0(y)$ asymptotically, as time goes to infinity, converges to an inhomogeneous limit profile
$h_\infty(y)$.

Firstly, note that Eulerian-to-Lagrangian transformation \rf{x_s}
together with relation \rf{u_h_rel} allows to rewrite the asymptotic convergence
estimate \rf{H1_conv} of Theorem 2.1 as one for solution $h(y,t)$ to
\rf{SSM}-\rf{BC} in the form:
\be
||h(y(\cdot,t),t)-h_{\infty}(y_\infty(\cdot))||_{H^1}\leqslant||h_0(y(\cdot,0))-h_{\infty}(y_\infty(\cdot))||_{H^1}
\exp\left\{-\frac{\pi^2t}{\nu}[\nu(1-R_0)-2P_0]^2\right\},
\lb{H1_conv_h}
\ee
holding for all $t>0$, where
\bes
h_{\infty}(y_\infty(x))=\frac{M}{u_\infty(x)},\ y_\infty(x)=\int_0^xu_\infty(x)\,dx,\quad\text{and}\quad h_0(y(x,0))=\frac{M}{u_0(x)}
\ees
are the limit and initial height profiles, respectively.
Moreover, by definitions \rf{u0_cond}-\rf{f_cond} and again relation
\rf{u_h_rel} one finds that
\bes
R_0=\tfrac{1}{M}||h_{0,x}||_2\quad\text{and }P_0=\tfrac{\nu}{M}||h_{\infty,x}||_2.
\ees
Using these, one can write out the right-hand side of \rf{H1_conv_h} solely
using $h_\infty$ and $h_0$ functions as
\begin{multline}
||h(y(\cdot,t),t)-h_{\infty}(y_\infty(\cdot))||_{H^1}\\\leqslant||h_0(y(\cdot,0))-h_{\infty}(y_\infty(\cdot))||_{H^1}
\exp\left\{-\pi^2t\left[1-\tfrac{||h_{0,x}||_2+2||h_{\infty,x}||_2}{M}\right]^2\right\}\ \text{for all }t>0.
\lb{H1_conv_h1}
\end{multline}
Interestingly, the exponential asymptotic decay rate of \rf{H1_conv_h1} turns out to be
independent of viscosity $\nu$ in this case.

Next, let us check conditions of Theorem 2.1 and calculate all entries of \rf{H1_conv_h1} for the following
concrete example of the initial data to \rf{SSM}:
\bea
&&h_0(y)=h_0(y(x,0))=u_0(x)=1,\nonumber\\
&&v_0(y)=\tfrac{1}{2}\sin(\pi y),\ v_0(y(x,0))=\tfrac{1}{2}\sin(\pi x),\nonumber\\
&&M=\nu=1,
\lb{ex_init_data}
\eea
having a constant initial height profile.

In this case, according to definitions \rf{f_def} and \rf{u_h_rel}
\bes
f_0(x)=\frac{M}{h_0(y(x,0))}\left[v_0(y(x,0))\right]_y=[v_0(y(x,0))]_x=\tfrac{\pi}{2}\cos(\pi x),
\ees
where we again used transformation of variables \rf{x_s}. Hence,
by definitions \rf{u0_cond}-\rf{f_cond} one calculates:
\be
R_0=0\quad\text{and }P_0=||\int_0^xf_0(s)\,ds||_2=||v_0(y(x,\,0))||_2=\frac{1}{2\sqrt{2}}.
\lb{Ms_ex1}
\ee
Therefore, in this case
\bes
\nu(1-R_0)-2P_0=1-\tfrac{1}{\sqrt{2}}>0
\ees
and all conditions of Theorem 2.1 are satisfied.
From equation \rf{eq-1} and definition \rf{u_h_rel} one obtains
\bes
h_\infty(y_\infty(x))-h_\infty(0)=\int_0^xv_0(y(s,\,0))\,ds=\frac{1}{2\pi}(1-\cos(\pi x)).
\ees
In turn, constant $h_\infty(0)$ is determined by the analogue of condition
\rf{C}:
\bes
\int_0^1\frac{dx}{h_\infty(y_\infty(x))}=1,
\ees
so that one obtains
\be
h_\infty(y_\infty(x))=\frac{1}{2\pi}(C_\infty-\cos(\pi x)),\text{ where }C_\infty\approx 6.37.
\lb{Ms_ex2}
\ee
One can also calculate explicitly
\be
||h_\infty(y_\infty(\cdot))-h_0(y(\cdot,0))||_{H^1}=||\frac{1}{2\pi}(C_\infty-\cos(\pi x))-1||_{H^1}\approx 0.37.
\lb{Ms_ex3}
\ee
Finally, substituting \rf{Ms_ex1}-\rf{Ms_ex3} into \rf{H1_conv_h} one
obtains an explicit asymptotic decay estimate for the solution to \rf{SSM} with initial
data given by \rf{ex_init_data}:
\beas
||h(y(\cdot,t),t)-\frac{1}{2\pi}(C_\infty-\cos(\pi\cdot))||_{H^1}&\leqslant&||\frac{1}{2\pi}(C_\infty-\cos(\pi x))-1||_{H^1} 
\exp\left\{-\pi^2t\left[1-\tfrac{1}{\sqrt{2}}\right]^2\right\}\\
&\leqslant& 0.37\exp\left\{-0.84t\right\}\quad \text{for all }t>0,
\eeas
We note that the last estimate implies also an asymptotic convergence result of
$h(\cdot,t)$ to $h_\infty$ in original Eulerian coordinates $(y,t)$ as $t\go\infty$.
\end{example}

\section{Time inhomogeneous case}\label{B2}
In the case of $f\in C(0,T;L^2(0,\,1))$ for all $T>0$ in \rf{eq-1}--\rf{f_mean}, we prove the following result.
\begin{theorem}\label{Th2}
Let
\be
0<u_0(x)\in H^1(0,\,1):\ R_0:=||(u_0^{-1})_x||_2<\infty.
\lb{R_def}
\ee
Let $f\in C(0,T;L^2(0,\,1))$ for all $T>0$ and $f_0,\,f_\infty\in L^2(0,1)$ be such that
\be
||f(\cdot,t)-f_\infty||_2\go 0\text{ as }t\go\infty\text{ and }||f(\cdot,t)-f_0||_2\go 0\text{ as }t\go0.
\lb{f_conv}
\ee
Assume that
\be
P_0:=|| \int_0^x {f_0(s)\,ds}||_2<\infty,\quad N_\infty:=\int_0^\infty||\int_0^xf_t(s,\,t)\,ds||_2\,dt<\infty.
\lb{P_def}
\ee
If conditions
\be
R_0\in[0,\,1)\text{ and }\nu\in\left(\nu_+,+\infty\right)
\lb{conv_cond_nonst}
\ee
hold with
\be
\nu_+:=\frac{2N_\infty+(1+R_0)P_0+\sqrt{(2N_\infty+P_0(1+R_0))^2-N_\infty^2(1-R_0^2)}}{1-R_0^2},
\lb{nu_+}
\ee
then a weak solution $u(x,t)$ to \rf{eq-1}--\rf{eq-3} considered with
\rf{cons_mass} obeys the uniform bounds
\be
0<A_-\leqslant u(x,t)\leqslant A_+<\infty
\lb{b_nonst}
\ee
with
\be
A_\pm:=\frac{\nu}{\nu\mp\left[2N_\infty+P_0+\sqrt{(N_\infty+P_0)^2+2(N_\infty+P_0)N_\infty+\nu^2R_0^2+2\nu P_0R_0}\right]}
\lb{A_pm}
\ee
for all $x\in (0,1)$ and $t>0$.

Furthermore, there exists a unique solution $u_\infty\in H^2(0,1)$ of the limit equation
\be
-(u_\infty^{-2}u_{\infty,x})_x=\nu^{-1}f_\infty(x)\text{ with } u_{\infty,x}(0)=u_{\infty,x}(1)=0,
\lb{lim_profile}
\ee 
given by \rf{LP}-\rf{C} considered with $f=f_\infty$, such that $u^{-1}(\cdot,t)$ converges
to $u_{\infty}^{-1}$, namely, for all $t>0$ it holds
\be
||u^{-1}(\cdot,t)-u^{-1}_{\infty}||_{H^1}\leqslant
e^{-B\,t}\left[||u_0^{-1}-u^{-1}_{\infty}||_{H^1}+
C\int \limits_0^t {\|f(.,s)  - f_{\infty}\|_2^2 e^{B\,s} ds }\right]\go 0,
\lb{H1_conv_nonst}
\ee
as $t\go\infty$, where constants
\be
C:=\tfrac{1}{\nu^{1/2}A_-^2}+\tfrac{1}{\nu^{1/2}A_+^2},\quad B:=\tfrac{\nu\pi^2}{2A_+^2}.
\lb{B_const}
\ee
\end{theorem}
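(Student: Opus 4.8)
The proof should parallel that of Theorem~\ref{Th1}, the two new difficulties being that the energy functional now depends on $t$ through $f(\cdot,t)$ and that the limit profile solves the stationary equation with $f_\infty$ rather than with the instantaneous source. As before, set $q=\nu^{1/2}u^{-1}$, so that \rf{eq-1}--\rf{eq-3} becomes $q_t=q^2\bigl(q_{xx}-\nu^{-1/2}f(x,t)\bigr)$ with $q_x(0,t)=q_x(1,t)=0$, $q(x,0)=q_0=\nu^{1/2}u_0^{-1}$ and $\int_0^1 q^{-1}\,dx=\nu^{-1/2}$. Multiplying by $-\bigl(q_{xx}-\nu^{-1/2}f\bigr)$ and integrating over $(0,1)$ gives
\[
\tfrac{d}{dt}\mathcal{E}(q(t),t)+\int_0^1 q^2\bigl(q_{xx}-\nu^{-1/2}f(x,t)\bigr)^2\,dx=\nu^{-1/2}\int_0^1 f_t(x,t)\,q\,dx ,
\]
with $\mathcal{E}(q,t):=\int_0^1\bigl[\tfrac12 q_x^2+\nu^{-1/2}f(x,t)\,q\bigr]\,dx$. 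Since \rf{f_mean} yields $\int_0^1 f_t\,dx=0$, integration by parts bounds the right-hand side by $\nu^{-1/2}\|\int_0^x f_t(s,t)\,ds\|_2\,\|q_x(\cdot,t)\|_2$; integrating in time, using $\int_0^x f(s,t)\,ds=\int_0^x f_0\,ds+\int_0^t\!\int_0^x f_\tau(s,\tau)\,ds\,d\tau$ to get $\|\int_0^x f(\cdot,t)\,ds\|_2\leqslant P_0+N_\infty$, and estimating $\mathcal{E}(q_0,0)\leqslant\tfrac12\nu R_0^2+P_0R_0$, one obtains a closed inequality for $\|q_x(\cdot,t)\|_2$.

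From that inequality a Gronwall/bootstrap argument — taking the supremum over $[0,t]$ and solving a quadratic — produces a uniform bound of the form $\|q_x(\cdot,t)\|_2\leqslant\nu^{-1/2}\bigl[2N_\infty+P_0+(\cdots)^{1/2}\bigr]$ with the bracket exactly as under the square root in \rf{A_pm}. Since $\int_0^1 q^{-1}\,dx=\nu^{-1/2}$ gives a point $x_0$ with $q(x_0,t)=\nu^{1/2}$, this controls $\|q(\cdot,t)-\nu^{1/2}\|_\infty$, and, recalling $q=\nu^{1/2}u^{-1}$, yields \rf{b_nonst}--\rf{A_pm}; the requirement that the resulting lower bound for $u$ be strictly positive is exactly the quadratic condition \rf{conv_cond_nonst}, with $\nu_+$ in \rf{nu_+} the larger root. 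I expect this a priori estimate to be the crux of the proof: unlike in Theorem~\ref{Th1} the energy is no longer monotone, so the bound must be recovered from its own right-hand side, and it is this self-referential structure that fixes the precise threshold $\nu_+$.

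With the uniform bounds in hand, the convergence part is structurally as in Theorem~\ref{Th1}. Note $\int_0^1 f_\infty\,dx=\lim_{t\to\infty}\int_0^1 f(\cdot,t)\,dx=0$, so $u_\infty\in H^2(0,1)$ given by \rf{LP}--\rf{C} with $f=f_\infty$ is the unique solution of \rf{lim_profile}; put $q_\infty=\nu^{1/2}u_\infty^{-1}$, so $q_{\infty,xx}=\nu^{-1/2}f_\infty$, and $w:=q-q_\infty$, so $w_x(0,t)=w_x(1,t)=0$ and $w_t=q^2\bigl(w_{xx}+\nu^{-1/2}(f_\infty-f)\bigr)$. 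Differentiating $\Phi(t):=\tfrac12\int_0^1 w_x^2\,dx$ and integrating by parts,
\[
\Phi'(t)=-\int_0^1 q^2 w_{xx}^2\,dx-\nu^{-1/2}\int_0^1 q^2 w_{xx}(f_\infty-f)\,dx .
\]
I would bound the last term by Cauchy--Schwarz and Young's inequality, using \rf{b_nonst} to replace $q$ by the constants $A_\pm$, absorbing part of $\int_0^1 q^2 w_{xx}^2\,dx$ and leaving a remainder $\leqslant C'\|f(\cdot,t)-f_\infty\|_2^2$; on the surviving negative term I would use $q^2\geqslant\nu A_+^{-2}$ together with the Poincar\'e inequality $\int_0^1 w_x^2\,dx\leqslant\pi^{-2}\int_0^1 w_{xx}^2\,dx$ (valid since $w_x$ vanishes at the endpoints). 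This gives $\Phi'(t)+2B\,\Phi(t)\leqslant C'\|f(\cdot,t)-f_\infty\|_2^2$ with $B$ as in \rf{B_const}.

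Finally, Gronwall's lemma gives $\Phi(t)\leqslant e^{-2Bt}\Phi(0)+C'\int_0^t e^{-2B(t-s)}\|f(\cdot,s)-f_\infty\|_2^2\,ds$. Since $\int_0^1 q^{-1}\,dx=\int_0^1 q_\infty^{-1}\,dx=\nu^{-1/2}$, the function $w(\cdot,t)$ vanishes somewhere in $(0,1)$, whence $\|w(\cdot,t)\|_\infty\leqslant\|w_x(\cdot,t)\|_2$ and $\|w(\cdot,t)\|_{H^1}^2\leqslant 2\|w_x(\cdot,t)\|_2^2=4\Phi(t)$ (and similarly at $t=0$). Recalling $w=\nu^{1/2}(u^{-1}-u_\infty^{-1})$, taking square roots via $\sqrt{a+b}\leqslant\sqrt a+\sqrt b$ and choosing the Young constant appropriately yields an estimate of the form \rf{H1_conv_nonst}, with $B$ and $C$ as in \rf{B_const}. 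That the bracket in \rf{H1_conv_nonst} tends to $0$ as $t\to\infty$ follows from \rf{f_conv}: given $\varepsilon>0$, split $\int_0^t=\int_0^{T_\varepsilon}+\int_{T_\varepsilon}^t$ where $\|f(\cdot,s)-f_\infty\|_2^2<\varepsilon$ on $[T_\varepsilon,\infty)$; the first piece is killed by $e^{-Bt}$ (using that $s\mapsto\|f(\cdot,s)-f_\infty\|_2$ is bounded on compacts, which holds since $f\in C(0,T;L^2(0,1))$) and the second is $O(\varepsilon/B)$, so the $\limsup_{t\to\infty}$ of the bracket is $\leqslant\varepsilon/B$ for every $\varepsilon>0$. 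All formal manipulations are justified exactly as in the proof of Theorem~\ref{Th1}.
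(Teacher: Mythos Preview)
Your outline is correct and tracks the paper's strategy closely; the substitution $q=\nu^{1/2}u^{-1}$, the time-dependent energy identity, the mean-value argument for pointwise bounds, and the relative-energy/Gronwall scheme for $w=q-q_\infty$ are all exactly what the paper does. Two technical differences are worth flagging.

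For the a priori bound on $\|q_x\|_2$, the paper does not simply take a supremum over $[0,t]$ and solve a quadratic. Instead, after bounding $P(t)\le P_0+N_\infty$ it completes the square, sets $v(t):=\bigl(\|q_x(\cdot,t)\|_2-\nu^{-1/2}(N_\infty+P_0)\bigr)^2$, and applies the Bihari--LaSalle lemma to the integral inequality $v(t)\le a_0+2\nu^{-1/2}\int_0^t v^{1/2}(\tau)\,N'(\tau)\,d\tau$, obtaining $v^{1/2}(t)\le a_0^{1/2}+\nu^{-1/2}N_\infty$. Your sup-argument is valid but cruder: it produces $(2N_\infty+P_0)^2$ under the square root in place of the paper's $(N_\infty+P_0)(3N_\infty+P_0)$, i.e.\ an excess of $\nu^{-1}N_\infty^2$, so the constants $A_\pm$ and the threshold $\nu_+$ stated in \rf{A_pm}--\rf{nu_+} would not be recovered exactly.

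For the decay of $\int_0^1 w_x^2\,dx$, the paper keeps the full square $\int_0^1 q^2\bigl[w_{xx}-\nu^{-1/2}(f-f_\infty)\bigr]^2\,dx$ in the dissipation, introduces $Z:=w_x-\nu^{-1/2}\int_0^x(f-f_\infty)\,ds$ (which vanishes at both endpoints), and applies Poincar\'e to $Z$ rather than to $w_x$; this is what produces the stated rate $B=\tfrac{\nu\pi^2}{2A_+^2}$ and the extra $A_+^{-2}$ summand in $C$. Your direct Young-then-Poincar\'e route on $w_{xx}$ is simpler and actually yields the better rate $2B$, so the discrepancy with \rf{B_const} is only that your constants are sharper, not that the argument fails. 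Finally, for the vanishing of the bracket in \rf{H1_conv_nonst} the paper invokes L'H\^opital's rule rather than your $\varepsilon$-splitting; both arguments are equivalent here.
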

\begin{proof}[Proof of Theorem~\ref{Th2}]
Similar to the proof of Theorem~\ref{Th1} by introducing a new function $q$ as
\be
\lb{q_1}
q(x,t) = \nu^{\frac{1}{2}} u^{-1}(x,t),
\ee
we can rewrite problem (\ref{eq-1})--(\ref{eq-3}) in the form
\begin{equation}\label{eq-1-1}
q_t = q^{2} ( q_{xx}  - \nu^{-\frac{1}{2}} f) \text{ in } Q_T,
\end{equation}
\begin{equation}\label{eq-2-1}
q_x(0,t)=q_x(1,t)=0\quad\text{for }t>0,
\end{equation}
\begin{equation}\label{eq-3-1}
q(x,0) = q_0(x):=\nu^{\frac{1}{2}}  u^{-1}_0(x)\quad\text{for }x\in(0,\,1),
\end{equation}
equipped with
\begin{equation}\label{mass-1}
 \int_0^1{q^{-1}(\cdot,\,t)\,dx} = \int_0^1{q^{-1}_0 \,dx}=\nu^{-\frac{1}{2}}\quad\text{for }t>0.
\end{equation}
Multiplying (\ref{eq-1-1}) by $- ( q_{xx}  - \nu^{-\frac{1}{2}} f) $,
after integration by parts over $Q_T$ and using property \rf{f_mean}, one
obtains the following energy equality:
\begin{equation}\label{eq-5-1}
\mathcal{E}(q(t))+\int_0^t\int_0^1{q^2 (q_{xx}  - \nu^{-\frac{1}{2}} f)^2 \,dx}+\nu^{-1/2}\int_0^t\int_0^1q_x\left[\int_0^xf_\tau\,ds\right]\,dxd\tau=\mathcal{E}(q_0),
\end{equation}
with the {\it energy functional}
\be
\mathcal{E}(q(\cdot,t)):=\int_0^1{ [ \tfrac{1}{2} q_x^2 + \nu^{-\frac{1}{2}} fq]\,dx}.
\lb{en_def}
\ee
Applying the H\"older inequality in \rf{eq-5-1} gives
\be
||q_x(\cdot,\,t)||_2^2\leqslant 2\nu^{-1/2}P(t)||q_x(\cdot,\,t)||_2+2\nu^{-1/2}\int_0^t||q_x(\cdot,\,\tau)||_2N'(\tau)\,d\tau+2\mathcal{E}(q_0),
\lb{y_t1}
\ee
where we introduced functions
\bes
P(t):=||\int_0^x { f(s,t)\,ds}||_2,\quad
N(t):=\int_0^t||\int_0^xf_\tau(s,\,\tau)\,ds||_2\,d\tau.
\ees
Using these, $f\in L^\infty(0,T;L^2(0,\,1))$ and \rf{P_def} one estimates
\beas
2P(t)P'(t)&=&\frac{d}{dt}P^2(t)=\frac{d}{dt}\int_0^1\left[\int_0^xf(s,\,t)\,ds\right]^2\,dx\\
&=&2\int_0^1\left[\int_0^xf(s,\,t)\,ds\right]\left[\int_0^xf_t(s,\,t)\,ds\right]\,dx\\
&\leqslant& 2P(t)\left(\int_0^1\left[\int_0^xf_t(s,\,t)\,ds\right]^2dx\right)^{1/2}=2P(t)N'(t).
\eeas
Hence, one deduces bounds
\be
P'(t)\leqslant N'(t)\quad\text{and}\quad P(t)\leqslant N_\infty+P_0,
\lb{PN_bnd}
\ee
where we used definitions \rf{P_def}.
Using \rf{PN_bnd}, one can rewrite \rf{y_t1} as
\bes
\left(||q_x(\cdot,\,t)||_2-\nu^{-1/2}(N_\infty+P_0)\right)^2\leqslant\nu^{-1}(N_\infty+P_0)^2+2\nu^{-1/2}\int_0^t||q_x(\cdot,\,\tau)||_2N'(\tau)d\tau+2\mathcal{E}(q_0).
\ees
By introduction of the new nonnegative function
\be
v(t):=(||q_x(\cdot,\,t)||_2-\nu^{-1/2}(N_\infty+P_0))^2,
\lb{v_def}
\ee
the last inequality together with \rf{PN_bnd} and \rf{en_def} implies:
\beas
v(t)&\leqslant&\nu^{-1}(N_\infty+P_0)^2+2\nu^{-1/2}\int_0^tv^{1/2}(\tau)N'(\tau)\,d\tau+2\nu^{-1}\int_0^t(N_\infty+P_0)N'(\tau)\,d\tau+2\mathcal{E}(q_0)\\
&=& \nu^{-1}(N_\infty+P_0)^2+2\nu^{-1/2}\int_0^tv^{1/2}N'(\tau)\,d\tau+2\nu^{-1}(N_\infty+P_0)N(t)+2\mathcal{E}(q_0)\\
&\leqslant& a_0+2\nu^{-1/2}\int_0^tv^{1/2}(\tau)N'(\tau)\,d\tau,
\eeas
where
\bes
a_0:=\nu^{-1}(N_\infty+P_0)^2+2\nu^{-1}(N_\infty+P_0)N_\infty+\nu R_0^2+2P_0R_0,
\ees
with $R_0$ being defined in \rf{R_def}.

The last inequality after an application of Bihari-LaSalle
lemma~\cite{Bi56} to $v(t)$, while using $a_0\geqslant 0$ and $N'(t)\geqslant 0$, implies 
\bes
v^{1/2}(t)\leqslant a_0^{1/2}+\nu^{-1/2}N(t)\leqslant a_0^{1/2}+\nu^{-1/2}N_\infty,
\ees
which, in turn, using definition \rf{v_def} gives
\bes
||q_x(\cdot,t)||_2\leqslant a_0^{1/2}+2\nu^{-1/2}N_\infty+\nu^{-1/2}P_0.
\ees
In turn, this together with \rf{mass-1} implies
\beas
|q(x,t)-\nu^{1/2}|&\leqslant&\int_0^1|q_x(x,t)|dx\leqslant ||q_x(\cdot,t)||_2\\
&\leqslant& a_0^{1/2}+2\nu^{-1/2}N_\infty+\nu^{-1/2}P_0 ,
\eeas
and, consequently, 
\be
\nu^{1/2}-a_0^{1/2}-2\nu^{-1/2}N_\infty-\nu^{-1/2}P_0\leqslant
q(x,t)\leqslant \nu^{1/2}+a_0^{1/2}+2\nu^{-1/2}N_\infty+\nu^{-1/2}P_0
\lb{Ms4}
\ee
hold for all $x\in(0,1)$ and $t>0$. Estimates \rf{Ms4} together with definition \rf{q_1} imply
the uniform pointwise bounds \rf{b_nonst}-\rf{A_pm} provided
\bes
\nu-a_0^{1/2}\nu^{1/2}-2N_\infty-P_0>0.
\lb{pos_cond}
\ees
This is equivalent to the following system of inequalities:
\beas
\nu&>&2N_\infty+P_0,\\
1&>&R_0^2,\\
(1-R_0^2)\nu^2-2\nu(2N_\infty+P_0+R_0P_0)+N_\infty^2&>&0.
\eeas
By solving the last inequality explicitly this system turns into:
\beas
\nu&>&2N_\infty+P_0,\\
1&>&R_0^2,\\
\nu&\in& (0,\,\nu_-)\cup(\nu_+,+\infty),
\eeas
where
\bes
\nu_\pm:=\frac{2N_\infty+(1+R_0)P_0\pm\sqrt{(2N_\infty+P_0(1+R_0))^2-N_\infty^2(1-R_0^2)}}{1-R_0^2},
\ees
It is easy to check that $\nu_-<2N_\infty+P_0$ for all nonnegative $N_\infty,\,P_0$
and $R_0$. Therefore, the last system of inequalities is equivalent to
conditions \rf{conv_cond_nonst}-\rf{nu_+}.

Next, let $u_\infty\in H^2(0,1)$ be a unique solution of the limit equation \rf{lim_profile}, and
\be
q_\infty(x):=\frac{\nu^{1/2}}{u_\infty(x)}.
\lb{w_1}
\ee
Let $w(x,t):=q(x,t)-q_\infty(x)$. Then
\beas
\mathcal{ E }(q|q_\infty)&:=&
\mathcal{ E }(q(t)) - \mathcal{E}(q_\infty) = \tfrac{1}{2} \int_0^1{ w_x ( q  + q_{\min } )_x \,dx}
+\nu^{-\frac{1}{2}} \int_0^1{ (q f(x,t) - q_\infty(x) f_{\infty}(x) )\,dx}\\
&=&- \tfrac{1}{2} \int_0^1{ w ( q  + q_{\min } )_{xx} \,dx}
+\nu^{-\frac{1}{2}}\int_0^1{w \, f_{\infty}(x)\,dx} +
 \nu^{-\frac{1}{2}}\int_0^1{q [f  - f_{\infty}(x)]\,dx}\\
&=&- \tfrac{1}{2} \int_0^1{ w w _{xx} \,dx} +
\nu^{-\frac{1}{2}}\int_0^1{q [f  - f_{\infty}(x)]\,dx}  = 
\tfrac{1}{2} \int_0^1{ w_x^2 \,dx} +\nu^{-\frac{1}{2}}\int_0^1{q [f  - f_{\infty}(x)]\,dx}.
\eeas
From here we find that
\bea 
\tfrac{d}{dt}\mathcal{ E }(q|q_\infty)&=& 
- \int_0^1{ w_t [ w_{xx} - \nu^{-\frac{1}{2}} (f  - f_{\infty}(x)) ] \,dx}
+ \nu^{-\frac{1}{2}}\int_0^1{q f_t  \,dx}\nonumber\\
&=&-\int_0^1{q^2 [ w_{xx} - \nu^{-\frac{1}{2}} (f  - f_{\infty}(x)) ]^2 \,dx}
+ \nu^{-\frac{1}{2}}\int_0^1{q f_t  \,dx} ,
\label{energy1}
\eea
i.\,e.
\begin{multline*}
\tfrac{1}{2} \tfrac{d}{dt} \int_0^1{ w_x^2 \,dx} +\int_0^1{q^2 [ w_{xx} - \nu^{-\frac{1}{2}} (f  - f_{\infty}(x)) ]^2 \,dx} = \\    
- \nu^{-\frac{1}{2}}\int_0^1{q_t [f  - f_{\infty}(x)]  \,dx}  \leqslant   
 \tfrac{1}{2} \int_0^1{q^2 [ w_{xx} - \nu^{-\frac{1}{2}} (f  - f_{\infty}(x)) ]^2 \,dx} + \\
\tfrac{\nu^{-1}}{2}\int_0^1{q^2 [f  - f_{\infty}(x)]^2  \,dx}, 
\end{multline*}
whence
\begin{equation}\label{start}
\tfrac{d}{dt} \int_0^1{ w_x^2 \,dx} +\int_0^1{q^2 [ w_{xx} - \nu^{-\frac{1}{2}} (f  - f_{\infty}(x)) ]^2 \,dx}
\leqslant  \nu^{-1} \int_0^1{q^2 |f  - f_{\infty}(x)|^2  \,dx}.  
\end{equation}
Let us denote by
$$
Z(x,t):= w_x - \nu^{-\frac{1}{2}} \int \limits_{0}^x (f  - f_{\infty}(s))\,ds,\quad K(x,t):=\int \limits_{0}^x (f  - f_{\infty}(s))\,ds.
$$
As $Z(0,t)=Z(1,t)=K(0,t)=K(1,t)=0$ for $t>0$, then by the Poincar\'{e} inequality and \rf{b_nonst}, one has
\be
 \tfrac{\nu \pi^2}{A_+^2} \int_0^1{Z^2 \,dx} \leqslant \int_0^1{ q^2 Z_x^2\,dx},\quad \pi||K(\cdot,\,t)||_2\leqslant||K_x(\cdot,\,t)||_2=||f(\cdot,\,t)  - f_{\infty}||_2.
\lb{Poincare}
\ee
Taking into account
\beas
\int_0^1{Z^2 \,dx}&=&\int_0^1{ w_x^2 \,dx} - 2 \nu^{-\frac{1}{2}}\int_0^1{ w_xK\,dx} +  
 \nu^{-1} \int_0^1K^2 \,dx\\
&\geqslant&\tfrac{1}{2}\int_0^1w_x^2 \,dx-\nu^{-1} \int_0^1K^2 \,dx,
\eeas
and \rf{Poincare} we deduce that
\bes
\int_0^1{ v^2[ w_{xx} - \nu^{-\frac{1}{2}} (f  - f_{\infty}(x)) ]^2 \,dx}
\geqslant \tfrac{\nu \pi^2}{A_+^2} \Bigl[\tfrac{1}{2}\int_0^1{ w_x^2 \,dx}-\tfrac{1}{\pi^2\nu}\| f(.,t)  - f_{\infty}(x) \|_2^2\Bigr].
\ees
Next, using again \rf{b_nonst} one estimates
\begin{multline} \label{jen}
\nu^{-1} \int_0^1{q^2 |f  - f_{\infty}(x)|^2  \,dx}+\tfrac{1}{A_+^2}\int_0^1\| f(.,t)  - f_{\infty}(x) \|_2^2 
\leqslant \\
\left(\tfrac{1}{A_-^2}+\tfrac{1}{A_+^2}\right)\| f(.,t)  - f_{\infty}(x) \|_2^2:=D(t).
\end{multline}
So, by (\ref{start}) one obtains
\bes
\tfrac{d}{dt}\int_0^1{ w_x^2 \,dx}  +\tfrac{\nu \pi^2}{2A_+^2}\int_0^1{ w_x^2 \,dx} \leqslant D(t). 
\ees
Consequently, it follows that
\be
\int_0^1{ w_x^2 \,dx}\leqslant e^{-B\,t}\Bigl[\| w_{0,x} \|_2^2 +\int \limits_0^t { D(s) e^{B\,s} ds }\Bigr]
\lb{Ms5}
\ee
where $B$ is from \rf{B_const}. It remains only to show that the right-hand side
of \rf{Ms5} tends to zero as $t\go\infty$. This is true immediately, if
\bes
\int\limits_0^\infty { D(s) e^{B\,s} ds }<\infty.
\ees
But if
\bes
\displaystyle \lim_{t\go\infty}\int\limits_0^t { D(s) e^{B\,s} ds }=+\infty,
\ees
then after an application of the L'Hopital's rule one has:
\bes
\displaystyle \lim_{t\go\infty}e^{-B\,t}\int\limits_0^t { D(s) e^{B\,s} ds }=\lim_{t\go\infty}\tfrac{D(t)}{B}=0.
\ees
Therefore, we conclude that \rf{Ms5} together with \rf{w_1} and \rf{q_1} implies asymptotic
convergence estimate \rf{H1_conv_nonst}.
\end{proof}
\begin{remark}
One can check that results of Theorem 3.1 are consistent with those of Theorem
2.1. Indeed, in the time-homogeneous case $f(x,t)=f_0(x)$ formulae
\rf{conv_cond_nonst}-\rf{nu_+} and \rf{b_nonst}-\rf{A_pm} coincide with
\rf{conv_cond} and \rf{b-00}, respectively. This is verified by the direct
substitution of $N_\infty=0$ into the former formulae. Also the asymptotic
convergence estimate \rf{H1_conv_nonst}-\rf{B_const} is consistent with 
\rf{H1_conv} in this case.
\end{remark}
\begin{corollary}\label{Col1}
Conditions of Theorem 3.1 imply also the direct exponential asymptotic convergence of 
$u(\cdot,t)$ to $u_{\infty}$ in $H^1(0,1)$. Namely, there exists a constant $C_1$ depending on $\nu,\,f(x,t)$ and $u_0,\,f_0,\,f_\infty$ such that 
for all $t>0$ it holds
\bes
\hspace{-1.2cm}||u(\cdot,t)-u_\infty||_{H^1}\leqslant
C_1e^{-B\,t}\left[||u_0-u_\infty||_{H^1}+\int \limits_0^t {\|f(.,t)  - f_{\infty}\|_2^2 e^{B\,s} ds }\right]\go 0\text{ as }t\go\infty,
\ees
\end{corollary}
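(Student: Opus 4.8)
The plan is to transfer the $H^1$-convergence of $u^{-1}$ from \rf{H1_conv_nonst} to $u$ itself, following the pattern of the proof of Corollary~\ref{Col}. Keep the notation $q=\nu^{1/2}u^{-1}$, $q_\infty=\nu^{1/2}u_\infty^{-1}$ and $w=q-q_\infty$ used in the proof of Theorem~\ref{Th2}. The starting point is the pointwise identity $u-u_\infty=\nu^{1/2}(q^{-1}-q_\infty^{-1})=-\nu^{1/2}\,w\,(qq_\infty)^{-1}$, which after differentiation gives $(u-u_\infty)_x=-\nu^{1/2}\big[w_x(qq_\infty)^{-1}-w\,(q_xq_\infty+qq_{\infty,x})(qq_\infty)^{-2}\big]$. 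Taking $L^2$-norms one gets an estimate of the form $\|u-u_\infty\|_{H^1}\leqslant\nu^{1/2}\,\Phi\,\big(\|w_x\|_2+\|w\|_\infty\big)$, where $\Phi$ is an explicit constant built from two-sided bounds on $q$ and $q_\infty$ together with $\|q_x\|_2$ and $\|q_{\infty,x}\|_2$; so it suffices to control these four quantities uniformly in $t$, the first two by the decaying right-hand side of \rf{H1_conv_nonst} and the remaining two by time-independent constants.

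First I would assemble the four ingredients. (i) The uniform bounds \rf{b_nonst}--\rf{A_pm} rewrite as $\nu^{1/2}A_+^{-1}\leqslant q(x,t)\leqslant\nu^{1/2}A_-^{-1}$; combined with $\int_0^1 u_\infty\,dx=1$ (which follows from \rf{C} taken with $f=f_\infty$) and with $\|q_{\infty,x}\|_2=\nu^{-1/2}P_\infty$, $P_\infty:=\|\int_0^x f_\infty(s)\,ds\|_2$, they yield $\tfrac{1}{1+P_\infty/\nu}\leqslant u_\infty(x)\leqslant\tfrac{1}{1-P_\infty/\nu}$, the denominators being positive because $P_\infty\leqslant N_\infty+P_0<\nu$ by \rf{conv_cond_nonst}--\rf{nu_+}. (ii) The a priori bound $\|q_x(\cdot,t)\|_2\leqslant a_0^{1/2}+2\nu^{-1/2}N_\infty+\nu^{-1/2}P_0$ obtained inside the proof of Theorem~\ref{Th2} is uniform in $t$. (iii) Integrating \rf{lim_profile} with the Neumann condition gives $(u_\infty^{-1})_x=\nu^{-1}\int_0^x f_\infty(s)\,ds$, so $\|q_{\infty,x}\|_2=\nu^{1/2}\|(u_\infty^{-1})_x\|_2=\nu^{-1/2}P_\infty<\infty$. (iv) Since $\int_0^1 q^{-1}(\cdot,t)\,dx=\int_0^1 q_\infty^{-1}\,dx=\nu^{-1/2}$ by \rf{mass-1}, the continuous function $w(\cdot,t)$ vanishes at some $x_1(t)\in(0,1)$, whence $\|w(\cdot,t)\|_\infty\leqslant\int_0^1|w_x(x,t)|\,dx\leqslant\|w_x(\cdot,t)\|_2$; in particular the term $\|w\|_\infty\|q_x\|_2$ costs only one power of $\|w_x\|_2$ and no decay is lost. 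Feeding (i)--(iv) into $\Phi$ produces $\|u-u_\infty\|_{H^1}\leqslant C_2\|w\|_{H^1}=C_2\nu^{1/2}\|u^{-1}-u_\infty^{-1}\|_{H^1}$ with $C_2$ depending only on $\nu$, $f_0$, $f_\infty$ and $u_0$.

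Finally I would convert the data term. Applying the same identity at $t=0$ gives $\|u_0^{-1}-u_\infty^{-1}\|_{H^1}\leqslant C_3\|u_0-u_\infty\|_{H^1}$, using the pointwise bounds $\tfrac{1}{1+R_0}\leqslant u_0(x)\leqslant\tfrac{1}{1-R_0}$ (valid because $R_0<1$; these follow from $\int_0^1 u_0\,dx=1$, a mean-value point $x_\ast$ with $q_0(x_\ast)=\nu^{1/2}$ and $\|q_{0,x}\|_2=\nu^{1/2}R_0$), the bounds on $u_\infty$ from (i), and $u_0\in H^1$, $u_\infty\in H^2\subset H^1$. Inserting this into \rf{H1_conv_nonst} and absorbing the constant $C$ of \rf{B_const} gives the claimed estimate with $C_1:=C_2\nu^{1/2}\max\{C_3,C\}$, whose right-hand side tends to $0$ as $t\to\infty$ by the same L'Hopital argument as at the end of the proof of Theorem~\ref{Th2}. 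I do not expect a substantive obstacle here; the only delicate point is the bookkeeping needed to ensure that every constant entering $C_1$ is finite and independent of $t$ — concretely, ingredient (iv), so that the $L^\infty$-norm of $w$ is paid for by a single factor $\|w_x\|_2$, and ingredient (ii), so that $\|q_x\|_2$ is controlled uniformly in time rather than merely on bounded intervals.
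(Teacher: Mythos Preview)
Your argument is correct and follows essentially the same route as the paper: transfer the $H^1$-estimate \rf{H1_conv_nonst} for $u^{-1}-u_\infty^{-1}$ to $u-u_\infty$ via the algebraic identity $u-u_\infty=-uu_\infty(u^{-1}-u_\infty^{-1})$ together with two-sided pointwise bounds on $u$, $u_0$ and $u_\infty$. The paper's proof is terse and cites only the pointwise bounds \rf{b_nonst} and \rf{u_inf_bnd}; you are more explicit in spelling out two ingredients that the paper leaves implicit, namely the uniform-in-time control of $\|q_x(\cdot,t)\|_2$ (your item (ii)) and the vanishing of $w(\cdot,t)$ at an interior point via the mass constraint (your item (iv)), both of which are indeed needed to bound the cross term $\|w\|_\infty\|q_x\|_2$ appearing after differentiation.
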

\begin{proof}
The statement follows from \rf{H1_conv_nonst}, uniform bounds \rf{b_nonst} and the following pointwise bounds:
\bea
&&\left(1+\frac{N_\infty+P_0}{\nu}\right)^{-1}\leqslant u_\infty(x)\leqslant \left(1-\frac{N_\infty+P_0}{\nu}\right)^{-1},\nonumber\\
&&\left(1+R_0\right)^{-1}\leqslant u_0(x)\leqslant \left(1-R_0\right)^{-1}\ \text{for }x\in (0,\,1).
\lb{u_inf_bnd}
\eea
Note, that the pointwise bounds for $u_\infty$ follow from the estimate
\bes
|u_\infty^{-1}(x)-1|\leqslant||\left(u_\infty^{-1}\right)_x||_2\leqslant\nu^{-1}||\int_0^xf_\infty(s)\,ds||_2\quad\text{for all }x\in(0,\,1),
\ees
and \rf{f_conv}, \rf{PN_bnd}.
In \rf{u_inf_bnd} we also used that $R_0<1$ and $N_\infty+P_0<\nu$ hold by assumptions \rf{conv_cond_nonst}-\rf{nu_+}.
\end{proof}
\begin{example}
Here, we calculate an explicit form of the asymptotic convergence estimate \rf{H1_conv_nonst}-\rf{B_const} for
the particular right-hand side
\bes
f(x,t)=\min\{1,\,1/t\}\cos(\pi x),\quad ||f(\cdot,t)||_2\go 0\text{ as }t\go\infty;
\ees
and the initial data
\bes
u_0=u_\infty=1.
\ees
In this case,
\beas
&&R_0=0,\quad P_0=||\int_0^x\cos(\pi s)\,ds||_2=\tfrac{1}{\sqrt{2}},\\
&&N_\infty=\int_1^\infty||\int_0^x\tfrac{\cos(\pi s)}{t^2}\,ds||_2\,dt=\tfrac{1}{\sqrt{2}}.
\eeas
Therefore, conditions \rf{conv_cond_nonst}-\rf{nu_+} reduce to
\bes
\nu\in\left(\tfrac{3}{\sqrt{2}}+2,\,\infty\right),
\ees
while constants \rf{A_pm} to
\bes
A_\pm=\tfrac{\nu}{\nu\mp[3/\sqrt{2}+2]}.
\ees
Hence, constants in \rf{B_const} become
\be
C=\tfrac{2}{\nu^{5/2}}(\nu^2+[3/\sqrt{2}+2]^2),\quad B=\frac{\pi^2[\nu-3/\sqrt{2}-2]^2}{2\nu}.
\lb{C_B_exmp}
\ee
Finally, estimate \rf{H1_conv_nonst} reduces in this example to
\bes
||u^{-1}(\cdot,t)-1||_{H^1}\leqslant
\tfrac{C}{2}e^{-Bt}\left(\int_0^1e^{Bs}\,ds+\int_1^t\tfrac{1}{s^2}e^{Bs}\,ds\right)\text{ for all }t>0.
\ees
For sufficiently large $t>1$ the last estimate implies that
\bes
||u^{-1}(\cdot,t)-1||_{H^1}\leqslant\frac{C}{2}\exp\{-Bt\}\left(\frac{2}{Bt^2}+\frac{e^B-1}{B}\right)\text{ as }t\go\infty
\ees
with the exponential decay rate $B$ from \rf{C_B_exmp}.
\end{example}

\section{Acknowledgements}
GK would like to acknowledge support from the Oxford Centre for Industrial and Applied Mathematics (OCIAM) of the University of Oxford. 
Part of this research was performed during participation of one of the authors at the thematic research program "Mathematical Biology"
organised by the Mittag-Leffler Institute.

\bibliographystyle{unsrtnat}
\bibliography{bibliography}
\clearpage
\addtocounter{tocdepth}{2}

\end{document}